\theoremstyle{plain} 
\newtheorem{theorem}{Theorem}[section]
\theoremstyle{definition}
\theoremstyle{remark}
\renewcommand{\leq}{\leqslant} 
\renewcommand{\geq}{\geqslant}
\newcommand{\gen}[1]{\langle#1\rangle}
\newcommand{\M}{\mathcal{M}}
\begin{document}
	\title{On the pair of nilpotent Lie algebras}
	\author[M. Sajedi]{Mostafa Sajedi$^{1}$}
	\author[M. R. R. Moghaddam]{Mohammad Reza R. Moghaddam$^{2}$}
	
	\date{}
	\keywords{Pair of Lie algebras, nilpotent Lie algebra, multiplier}
	\subjclass[2020]{17B30, 17B60, 17B99.}
	
	\address{$^{1}$ Department of Mathematics, Mashhad Branch, Islamic Azad University, Mashhad, Iran}	
	\address{$^{2}$ Department of Mathematics, Khayyam University, Mashhad, Iran,\\
		Department of Pure Mathematics, Centre of Excellence in Analysis on Algebraic Structures (CEAAS), Ferdowsi University of Mashhad, P.O. Box 1159, Mashhad, Iran and\\
		}

	\email{m.sajedi@esfarayen.ac.ir}
	\email{m.r.moghaddam@khayyam.ac.ir; rezam@ferdowsi.um.ac.ir}	
%%%%%%%%%%%%%%%%%%%%%%%%%%%%%%%%%%%%%%%%%%%%%%%%%%%%%%%%%%%%%

\begin{abstract}
	
     Let $(N,L)$ be a pair of finite dimensional nilpotent Lie algebras and $N$ admits a complement $K$ in $L$ such that $\dim N=n$ and $\dim K=m$. Let $s(N,L)=\frac{1}{2}(n-1)(n-2)+1+(n-1)m-\dim \M(N,L)$, where $\M(N,L)$ denotes the multiplier of the pair $(N,L)$. The aim of this paper is to classify all pairs of finite dimensional nilpotent Lie algebras $(N,L)$, for which $s(N,L)=6,7$.

\end{abstract}
\maketitle
%==================================================
\section{Introduction ad preliminaries}

%-------------------------------------------------------------------
Suppose $L$ is a Lie algebra over an algebraically closed field $\mathcal{K}$ with $char(\mathcal{K})\neq 2$ and $L=F/R$ is a free presentation of $L$, where $F$ is a free Lie algebra. Then we call $(R\cap F^2)/[R,F]$ the multiplier of $L$ and denote it by $\M(L)$. Now, let $N$ be an ideal of $L$, where $N\cong S/R$. Therefore, we define the multiplier of the pair $(N,L)$ to be $\M (N,L)=(R\cap[S,F])/[R,F]$. One notes that if $N=L$, then $\M (N,L)\cong \M (L)$.

We associate to every $n$-dimensional nilpotent Lie algebra two variants $t(L)$ and $s(L)$ as follows
\begin{gather*}
	t(L)=\frac{1}{2}n(n-1)-\dim\mathcal{M}(L),\\
	s(L)=\frac{1}{2}(n-1)(n-2)+1-\dim\mathcal{M}(L).
\end{gather*}
(For more detailes one may refer to \cite{hd.fs.me.1}, \cite{zh.fs.hd2} and  \cite{pn}). 

Nilpotent Lie algebras with $t(L)\le 10$ and $s(L) \leq 7$ are classified in \cite{pb.km.es,hd.me.ha, hd.fs.me.2, ph, ph.es, pn.as, as.pn.1, as.pn.2}.  Also, in  \cite{hd.me.bj}, filiform Lie algebras for $t(L)\le 23$ are classified. 

Khamseh and Alizadeh Niri \cite{ek.sa}, define two non negative variants $t(N,L)$ and $s(N,L)$ for the pair of Lie algebras $(N,L)$ as follows
\begin{gather*}
	t(N,L)=\frac{1}{2}n(n+2m-1)-\dim\mathcal{M}(N,L),\\
	s(N,L)=\frac{1}{2}(n-1)(n-2)+1+(n-1)m-\dim\mathcal{M}(N,L),
\end{gather*}
where $m=\dim{K}$ and $K$ is the complement of $N$ in $L$. They also classify all pairs of Lie algebras $(N,L)$, when $t(N,L)\le 6$ and $s(N,L)\le 2$.

In the present paper we classify all pairs of Lie algebras $(N,L)$, when $s(N,L)=6,7$. With a similar argument one can classify all pairs of Lie algebras $(N,L)$, when $s(N,L)\le 5$, which we leave to the reader. Finally, note that in the present paper we substantially use the ideas of \cite{hd.me.ha} and \cite{hd.me.bj} to do our classifications.
%-------------------------------------------
In this paper, $H(m)$ denotes the Heisenberg Lie algebra of dimension $2m+1$ and is given by
$H(m)=\gen{x,x_1,\ldots,x_{2m}:[x_{2i-1},x_{2i}]=x,i=1,\ldots,m}.$
Moreover, $A(n)$ is an $n$-dimensional abelian Lie algebras.

A Lie algebra $L$ is called a central product of ideals $M$ and $N$, if $L = M + N$, $[M, N] = 0$ and $M \cap N \subseteq Z(L)$. We denote the central product of two Lie algebras $M$ and $N$ by $M \dotplus N$.
In the following theorems, the structure of nilpotent Lie algebras $L$ with $s(L) \leq 7$ is given.
%----------------------------------------------------------------------
\begin{theorem}[\cite{pn,hd.fs.me.2}] \label{s=0,1,2,3}
	Suppose $L$ is an $n$-dimensional non-abelian nilpotent Lie algebra. Then
	\begin{itemize}
		\item[(1)]$s(L)=0$ if and only if $L\cong H(1)\oplus A(n-3)$.
		\item[(2)]$s(L)=1$ if and only if $L\cong L_{5,8}$.
		\item[(3)]$s(L)=2$ if and only if $L$ is isomorphic to $L_{4,3}$,$L_{5,8}\oplus A(1)$,or $H(r)\oplus A(n-2r-1),$
		for some $r\geq2$.
		\item[(4)]$s(L)=3$ if and only if $L$ is isomorphic to $L_{4,3}\oplus A(1)$, $L_{5,5}$, $L_{6,22}(\epsilon)$, $L_{6,26}$ or $L_{5,8}\oplus A(2)$.
	\end{itemize}
\end{theorem}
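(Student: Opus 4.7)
My plan is to reduce the classification to a finite case analysis by combining the sharp upper bound on the Schur multiplier with the precise behaviour of $s(L)$ under direct sums with abelian factors.

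First I would recall the classical inequality
\[
\dim\M(L)\leq \tfrac{1}{2}(n-1)(n-2)+1,
\]
equivalently $s(L)\geq 0$, proved by an inductive argument using the five-term exact sequence relating $\M(L)$ and $\M(L/Z)$ for a one-dimensional central ideal $Z$, together with the identity $\dim\M(A(n))=\binom{n}{2}$. Characterising equality forces $L\cong H(1)\oplus A(n-3)$, which settles part $(1)$.

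Next, the K\"unneth-type identity
\[
\dim\M(L_{0}\oplus A(k))=\dim\M(L_{0})+k\dim(L_{0}/L_{0}')+\binom{k}{2}
\]
yields, after a short computation,
\[
s(L_{0}\oplus A(k))=s(L_{0})+k(c-1),\qquad c:=\dim L_{0}'.
\]
Thus $s$ is invariant under adjoining abelian summands precisely when $\dim L_{0}'=1$, and strictly grows otherwise. This is the key structural reduction: if $s(L)\leq 3$, then either the non-abelian part of $L$ is a generalised Heisenberg algebra $H(r)$ (handled directly), or $\dim L_{0}'\geq 2$, forcing $k\leq 3/(c-1)\leq 3$.

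With this reduction in hand, the remainder is a finite check. For $L_{0}=H(r)$ one computes $\dim\M(H(1))=2$ and $\dim\M(H(r))=2r^{2}-r-1$ for $r\geq 2$, giving $s(H(1)\oplus A(k))=0$ and $s(H(r)\oplus A(k))=2$ for $r\geq 2$, accounting for the Heisenberg families in parts $(1)$ and $(3)$. For the remaining stem algebras one must inspect the classification of low-dimensional nilpotent Lie algebras up to dimension $6$, compute $\dim\M$ via a minimal free presentation, and single out those yielding $s\leq 3$; this produces $L_{4,3}$, $L_{5,5}$, $L_{5,8}$, $L_{6,22}(\epsilon)$ and $L_{6,26}$ in parts $(2)$--$(4)$. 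The main obstacle is justifying the finiteness step cleanly: a refined upper bound on $\dim\M(L)$ that incorporates $\dim L'$ is needed to rule out stem algebras of large dimension with $\dim L'\geq 2$ and $s(L)\leq 3$; computing $\dim\M$ for each surviving candidate also requires care, as the multiplier is sensitive to the specific bracket structure, and this is the most error-prone step in the classification.
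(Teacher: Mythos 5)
The paper does not actually prove this theorem: it is quoted from \cite{pn} and \cite{hd.fs.me.2} and stated without proof, so there is no internal argument to measure your proposal against. Judged on its own terms, your skeleton is the right one and matches the strategy of the cited sources. The identity $s(L_{0}\oplus A(k))=s(L_{0})+k(\dim L_{0}^{2}-1)$ is correct (it follows from Theorem \ref{direct sum of multiplier} with $K=A(k)$ by the computation you indicate), it correctly reproduces every abelian-sum pattern in the statement ($s(L_{5,8}\oplus A(k))=1+k$, $s(L_{4,3}\oplus A(k))=2+k$, $s(H(r)\oplus A(k))=2$ for $r\geq 2$, etc.), and the case $\dim L_{0}^{2}=1$ is indeed disposed of by Theorem \ref{dimA^2=1} together with Theorem \ref{dim Heisenberg}.

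The genuine gap is the one you flag yourself and then leave open: the claim that only finitely many algebras $L_{0}$ without abelian direct summand and with $\dim L_{0}^{2}\geq 2$ satisfy $s(L_{0})\leq 3$ is asserted, not proved, and without it your ``finite check'' is not finite. The missing tool is the Niroomand--Russo bound $\dim\M(L)\leq \frac{1}{2}(n+d-2)(n-d-1)+1$ with $d=\dim L^{2}$ (the same bound this paper invokes as Theorem 3.1 of \cite{pn.fr}), which is equivalent to $s(L)\geq \frac{1}{2}d(d-1)$ and immediately forces $d\leq 3$ when $s(L)\leq 3$. Even that only bounds $d$, not $n$; to bound $n$ one must argue inductively on a one-dimensional central ideal $Z\subseteq Z(L)\cap L^{2}$, using the exact-sequence estimate relating $\M(L)$ and $\M(L/Z)$ to show that $s$ cannot stay small as the dimension of a stem algebra grows. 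That inductive descent is the actual content of \cite{pn} and \cite{hd.fs.me.2}, and it is precisely the part your proposal replaces with ``inspect the classification up to dimension $6$'' --- which presupposes the dimension bound you have not established. Until that step is supplied, the proof is a correct reduction plus an unjustified finiteness claim.
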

%----------------------------------------------------------------------

%----------------------------------------------------------------------
\begin{theorem}[\cite{as.pn.1,as.pn.2}] \label{s=4,5}
	Let $L$ be a non-abelian $d$-dimensional nilpotent Lie algebra. Then	
	\begin{itemize}
		\item[(i)]$s(L)=4$ if and only if $L$ is isomorphic to one of the Lie algebras 
		$L_{5,8}\oplus A(3)$, $L_{4,3}\oplus A(2)$,	$L_{5,5}\oplus A(1)$, $L_{5,6}$, $L_{5,7}$,  $L_{5,9}$,$L_{6,22}(\epsilon)\oplus A(1)$ or $37A$.
				
		\item[(ii)] $s(L)=5$ if and only if $L$ is isomorphic to one of the Lie algebras 
		$L_{5,8}\oplus A(4)$, $L_{4,3}\oplus A(3)$,	$L_{5,5}\oplus A(2)$, $L_{6,22}(\epsilon)\oplus A(2)$,$L_{6,26}\oplus A(1)$, $L_{6,10}$,  $L_{6,23}$, $L_{6,25}$, $L_{6,27}$, $37B$ or $37D$.
		
	\end{itemize}	
\end{theorem}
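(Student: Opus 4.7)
The plan is to combine three ingredients: a reduction formula for $s$ under direct sum with an abelian algebra, an upper bound on $\dim \mathcal{M}(L)$ in terms of $\dim L$ and $\dim L^2$, and the known classification of low-dimensional nilpotent Lie algebras. First, the K\"unneth-type decomposition $\mathcal{M}(A \oplus A(m)) \cong \mathcal{M}(A) \oplus \mathcal{M}(A(m)) \oplus (A/A^2) \otimes A(m)$ yields, writing $d(A) = \dim(A/A^2)$ for the minimal number of generators, the identity
\[ s(A \oplus A(m)) \;=\; s(A) + m\bigl(\dim A - 1 - d(A)\bigr). \]
Applied to the algebras appearing in Theorem \ref{s=0,1,2,3}, this accounts for every decomposable summand in the statement (such as $L_{5,8} \oplus A(k)$, $L_{4,3} \oplus A(k)$, $L_{5,5} \oplus A(k)$ and $L_{6,22}(\epsilon) \oplus A(k)$) and reduces the classification to those $L$ carrying no non-trivial abelian direct summand.

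Second, I would combine the defining equality $\dim \mathcal{M}(L) = \frac{1}{2}(n-1)(n-2) + 1 - s(L)$ with a standard upper bound of the form $\dim \mathcal{M}(L) \leq \frac{1}{2}(n+k-2)(n-k-1) + 1$, where $k = \dim L^2$, to obtain a quadratic constraint on $n$ and $k$ once $s(L) \in \{4,5\}$ is fixed. A case split on $k$ and on the nilpotency class then confines the indecomposable candidates to $n \leq 7$, while the decomposable ones are handled purely by the reduction formula and Theorem \ref{s=0,1,2,3}.

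Third, for each admissible pair $(n,k)$ with $n \leq 7$ I would invoke the explicit classification of nilpotent Lie algebras in dimensions $\leq 7$ (de Graaf for $n \leq 6$; Seeley and Magnin for $n = 7$, which is where the exotic algebras $37A, 37B, 37D$ enter), and for each candidate compute $\dim \mathcal{M}(L)$ either from a minimal free presentation $F/R$ via $\mathcal{M}(L) = (R \cap F^2)/[R,F]$ or by consulting the existing multiplier tables. Retaining only those $L$ with $s(L) = 4$ or $s(L) = 5$ and cross-checking against the reduction formula would produce precisely the lists (i) and (ii).

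The main obstacle will be the somewhat delicate step of ruling out indecomposable algebras in dimensions $6$ and $7$ that have non-zero centre but no abelian direct summand: for such $L$ the generic multiplier bound above is not always tight, and one must sharpen it by exploiting the specific structure of $Z(L) \cap L^2$, mirroring the adaptation carried out in \cite{hd.me.ha} for the corresponding treatment of $t(L) \leq 10$. The seven-dimensional computations themselves are finite but voluminous, and the bookkeeping that guarantees no algebra has been overlooked is where most of the care is needed.
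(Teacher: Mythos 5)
First, a point of comparison: the paper does not prove this statement at all. Theorem~\ref{s=4,5} is imported verbatim from \cite{as.pn.1,as.pn.2} (Shamsaki--Niroomand) and the authors explicitly state such results without proof, so there is no in-paper argument to measure your proposal against. What you have written is a reconstruction of the strategy of the cited sources, and in outline it is faithful to them. Your reduction formula is correct: from Theorem~\ref{direct sum of multiplier} one gets $s(A\oplus A(m))=s(A)+m\bigl(\dim A-1-\dim(A/A^2)\bigr)=s(A)+m(\dim A^2-1)$, and this does check out against every decomposable entry in the lists (e.g.\ $s(L_{5,8})=1$, $\dim L_{5,8}^2=2$, so $s(L_{5,8}\oplus A(3))=4$).

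There is, however, a genuine gap at the step where you claim the quadratic constraint ``confines the indecomposable candidates to $n\leq 7$.'' The bound $\dim\M(L)\leq\frac{1}{2}(n+k-2)(n-k-1)+1$ with $k=\dim L^2$, combined with the definition of $s(L)$, yields only $s(L)\geq\binom{k}{2}$; for $s(L)\leq 5$ this bounds $\dim L^2\leq 3$ but places \emph{no} upper bound on $n$ (indeed $H(r)\oplus A(n-2r-1)$ has unbounded dimension with $s=0$ or $2$). To cap the dimension of algebras without abelian direct summands one needs sharper inputs --- in the cited papers these are bounds of the form $\dim\M(L)\leq\dim\M(L/Z)+\cdots$ for a central ideal $Z$, capability criteria, and the refined estimates depending on $\dim(L/L^2)$ --- none of which is supplied or even named by your appeal to ``nilpotency class.'' Until that step is made precise, the completeness of the lists (the ``only if'' direction) is not established; the ``if'' direction is fine in principle but rests entirely on the multiplier tables you defer to. So the proposal is a plausible roadmap rather than a proof, with the dimension-bounding argument being the missing load-bearing piece.
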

%----------------------------------------------------------------------

\begin{theorem}[\cite{pn.as}] \label{s=6,7}
	Let $L$ be a non-abelian $d$-dimensional nilpotent Lie algebra. Then	
	\begin{itemize}
		\item[(i)]$s(L)=6$ if and only if $L$ is isomorphic to one of the Lie algebras
			$L_{5,8}\oplus A(5)$, $L_{4,3}\oplus A(4)$,	$L_{5,5}\oplus A(3)$, $L_{6,22}(\epsilon)\oplus A(3)$,
		$L_{6,10}\oplus A(1)$,  $27A$,  $157$,  $37A \oplus A(1)$, $L_{6,6}$,  $L_{6,7}$,  $L_{6,9}$, $L_{6,11}$, $L_{6,12}$,	$L_{6,19}(\epsilon)$, $L_{6,20}$ or $L_{6,24}(\epsilon)$.
		
		\item[(ii)] $s(L)=7$ if and only if $L$ is isomorphic to one of the Lie algebras 
		$L_{5,8}\oplus A(6)$, $L_{4,3}\oplus A(5)$,	$L_{5,5}\oplus A(4)$, $L_{6,22}(\epsilon)\oplus A(4)$,  $27B$, $L_{6,10}\oplus A(2)$, $27A \oplus A(1)$,  $157 \oplus A(1)$, $L_{6,10} \dotplus H(1)$,  $H(1)\oplus H(2)$, $S_1$, $S_2$, $S_3$, $L_{6,23} \oplus A(1)$,  $L_{6,25} \oplus A(1)$, $37B \oplus A(1)$, $37C \oplus A(1)$, $37D \oplus A(1)$, $L_{6,26} \oplus A(2)$, $L_{6,13}$, $257A$,  $257C$, $257F$ or $L_{6,21}(\epsilon)$.		
	\end{itemize}	
\end{theorem}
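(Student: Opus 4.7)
The plan is to build on Theorems \ref{s=0,1,2,3} and \ref{s=4,5} by a direct reduction: whenever $L$ decomposes as $L \cong H \oplus A(k)$ with $H$ having no abelian direct summand, one has a clean linear relation between $s(L)$ and $s(H)$ governed by $\dim Z(H)$ and $k$, derived from the standard formula $\dim\M(H\oplus A(k)) = \dim\M(H) + k\dim(H/H^2) + \binom{k}{2}$. Substituting this into the definition of $s$, I would tabulate which pairs $(H,k)$ coming from the $s(H)\leq 5$ lists push $s$ up to exactly $6$ or $7$. This immediately accounts for entries like $L_{5,8}\oplus A(5)$, $L_{4,3}\oplus A(4)$, $L_{5,5}\oplus A(3)$, $L_{6,22}(\epsilon)\oplus A(3)$, $L_{6,10}\oplus A(1)$ (and their analogues for $s=7$), and similarly the $\oplus A(1)$, $\oplus A(2)$ extensions of the $s=4,5$ examples.

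Next, I would bound $\dim L$ in the absence of an abelian direct summand. Using Niroomand's inequality $\dim\M(L)\leq \tfrac12(n-1)(n-2)+1$ together with its sharper form when $L$ is capable, one shows that for $s(L)\leq 7$ and $L$ without abelian summand, $\dim L \leq 7$; moreover if $\dim L^2$ is large then stronger bounds of the form $\dim\M(L)\leq \tfrac12(n-1)(n-2)+1-\dim L^2 +\delta$ further restrict the possibilities. Thus the remaining algebras to check lie in the known classification lists of nilpotent Lie algebras of dimensions $5$, $6$, and $7$ (de Graaf's lists), and one only needs to examine those without abelian summand.

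For each candidate in these dimensions I would compute $\dim\M(L)$ from a free presentation (or quote the standard tables), then evaluate $s(L)$. Comparing against $6$ and $7$ yields the remaining entries: the genuinely $6$-dimensional algebras $L_{6,6}, L_{6,7}, L_{6,9}, L_{6,11}, L_{6,12}, L_{6,19}(\epsilon), L_{6,20}, L_{6,24}(\epsilon)$ (for $s=6$) and $L_{6,13}, L_{6,21}(\epsilon)$ (for $s=7$), together with the $7$-dimensional nilpotent algebras labelled $27A, 157, 27B, 37A$ etc., and the special indecomposable central product $L_{6,10}\dotplus H(1)$ and $H(1)\oplus H(2)$. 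The central product cases require a separate multiplier calculation: for $A\dotplus B$ one uses the exact sequence relating $\M(A\dotplus B)$ to $\M(A)\oplus\M(B)$ modulo the shared centre, which gives $\dim\M(A\dotplus B) = \dim\M(A)+\dim\M(B)+\dim(A/A^2)\cdot\dim(B/B^2)-1$ in the minimal overlap case.

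The main obstacle is the finiteness/boundedness step of paragraph two: showing that \emph{every} possibility with $s(L)\in\{6,7\}$ falls into one of the enumerated families requires ruling out an \textsl{a priori} infinite family of non-decomposable candidates of growing dimension. I expect to handle this by following the technique of \cite{hd.me.ha,hd.me.bj}: stratify by $\dim L^2$ and by the coclass, and in each stratum exhibit an explicit upper bound on $\dim\M(L)$ that, compared with $\tfrac12(n-1)(n-2)+1 - s(L)$, forces $n\leq 7$. The bookkeeping of the resulting case split, together with accurate multiplier computations for the 7-dimensional algebras $257A,257C,257F$ and the Heisenberg-type central products, is the most delicate part; once these are in place, the union of the lists from the decomposable reduction and from the indecomposable case analysis yields exactly the stated classifications.
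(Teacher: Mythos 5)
The first thing to note is that the paper does not prove this statement at all: Theorem \ref{s=6,7} is imported from \cite{pn.as}, and the surrounding text says explicitly that these background results are ``stated without proofs.'' So there is no in-paper argument to compare yours against; the relevant benchmark is the proof in the cited reference, whose overall architecture --- reduce the decomposable case via the direct-sum formula of Theorem \ref{direct sum of multiplier}, then bound the dimension of the remaining candidates using the Niroomand--Russo inequality $\dim\M(L)\leq \frac{1}{2}(n+\dim L^2-2)(n-\dim L^2-1)+1$ and sweep the low-dimensional classification lists --- is indeed the strategy you describe.

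That said, what you have written is a plan rather than a proof, and the points you defer are exactly where the content lies. Concretely: (1) your claimed bound $\dim L\leq 7$ for candidates without an abelian direct summand is already contradicted by the theorem's own list, since $S_1$, $S_2$, $S_3$, $H(1)\oplus H(2)$ and $L_{6,10}\dotplus H(1)$ in part (ii) are all $8$-dimensional; the finiteness argument therefore needs to be redone with the correct threshold, and your own phrasing (``I expect to handle this'') concedes it is not yet carried out. (2) The central-product formula $\dim\M(A\dotplus B)=\dim\M(A)+\dim\M(B)+\dim(A/A^2)\dim(B/B^2)-1$ is not a theorem in this generality; one only has that $\M(A\dotplus B)$ is a quotient of $\M(A\oplus B)$, and the exact dimension for $L_{6,10}\dotplus H(1)$ must be computed from a presentation. (3) The exhaustive evaluation of $\dim\M(L)$ over the seven-dimensional algebras ($27A$, $27B$, $157$, $257A$, $257C$, $257F$, $37A$--$37D$, \dots) is the bulk of the labour and is only gestured at. None of these is a wrong idea in principle, but until they are filled in the ``only if'' direction of both parts remains unestablished.
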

Here, the notations used for nilpotent Lie algebras are taken from \cite{sc.wg,mpg}. 
It is necessary to mention that $L_{6,i}=L_{5,i} \oplus A(1)$, for all $i \leq 9$.

%----------------------------------------------------------------------
The following theorems are needed for proving our results and we state them without proofs.
%----------------------------------------------------------------------
\begin{theorem}[\cite{pn.fr}]\label{direct sum of multiplier}
Let $L$ and $K$ be two finite dimensional Lie algebras. Then
\[\dim\M(L\oplus K)=\dim\M(L)+\dim\M(K)+\dim (L/L^2)\dim (K/K^2).\]
\end{theorem}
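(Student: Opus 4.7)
The plan is to combine free presentations of $L$ and $K$ into a single free presentation of $L\oplus K$ and then split the resulting multiplier into three natural pieces, in direct analogy with the Schur--K\"unneth formula for groups.

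Step one is the set-up. Pick free presentations $L\cong F_1/R_1$ and $K\cong F_2/R_2$ where $F_1$ and $F_2$ are free Lie algebras on disjoint generating sets $X_1$ and $X_2$, and let $F$ be the free Lie algebra on $X_1\cup X_2$, which naturally contains $F_1$ and $F_2$ as subalgebras. Then $L\oplus K\cong F/T$, where $T$ is the ideal of $F$ generated by $R_1\cup R_2\cup[X_1,X_2]$, and one checks that $T=R_1+R_2+J$, with $J$ the ideal of $F$ generated by $[F_1,F_2]$. By definition,
\[\M(L\oplus K)=(T\cap F^2)/[T,F].\]

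Step two is to exhibit a natural decomposition
\[(T\cap F^2)/[T,F]\;\cong\;\M(L)\oplus\M(K)\oplus\bigl(L/L^2\otimes K/K^2\bigr).\]
The subspaces $(R_i\cap F_i^2)/[R_i,F_i]$ embed to give the first two summands, namely $\M(L)$ and $\M(K)$. For the third, repeated Jacobi identities reduce $[F_1,F_2]$ modulo $[F_1^2,F_2]+[F_1,F_2^2]+[R_1,F_2]+[F_1,R_2]$ to the span of brackets $[x,y]$ with $x\in X_1$ and $y\in X_2$, and the assignment $[x,y]\mapsto\bar x\otimes\bar y$ induces an isomorphism onto $(F_1/(F_1^2+R_1))\otimes(F_2/(F_2^2+R_2))=L/L^2\otimes K/K^2$. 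Summing dimensions yields the claimed formula.

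The main obstacle I expect is verifying that these three summands really are independent and jointly span the whole quotient. This amounts to expanding commutators $[t,f]$ with $t\in T$ and $f\in F$ and tracking, via the Jacobi identity, which of the three subspaces each resulting term lands in; the disjointness $F_1\cap F_2=0$ (arising from the disjointness of $X_1$ and $X_2$) is the key input preventing any collapse between the unmixed summands and the mixed one. Once this bookkeeping is complete, the dimension count is automatic and the identity $\dim\M(L\oplus K)=\dim\M(L)+\dim\M(K)+\dim(L/L^2)\dim(K/K^2)$ follows.
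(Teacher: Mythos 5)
The paper itself offers no proof of this statement: it is quoted from Niroomand--Russo (who in turn take it from earlier work of Batten and Moneyhun), so there is no internal argument to compare yours against. Your sketch is the standard free-presentation proof of this K\"unneth-type formula and its skeleton is correct: with $F$ free on $X_1\sqcup X_2$, $T=R_1+R_2+J$ and $J$ the ideal generated by $[F_1,F_2]$, one does get vector-space decompositions $T\cap F^2=(R_1\cap F_1^2)\oplus(R_2\cap F_2^2)\oplus J$ and $[T,F]=[R_1,F_1]\oplus[R_2,F_2]\oplus\bigl([R_1,F_2]+[R_2,F_1]+[J,F]\bigr)$, since every mixed bracket lands in $J$; this cleanly produces the three summands. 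The one step you should not dismiss as mere bookkeeping is the injectivity of the mixed piece, i.e.\ that $J/\bigl([J,F]+[R_1,F_2]+[R_2,F_1]\bigr)$ really has dimension $\dim(L/L^2)\dim(K/K^2)$ rather than something smaller: Jacobi manipulations only give you surjectivity of the map from $L/L^2\otimes K/K^2$. The clean way to close this is the bigrading of $F$ by $(X_1\text{-degree},X_2\text{-degree})$: $J$ is exactly the sum of the components of bidegree $(a,b)$ with $a,b\geq 1$, and $[J,F]+[F_1^2,F_2]+[F_1,F_2^2]$ is exactly the sum of those with $a+b\geq 3$, so $J/[J,F]$ is canonically the bidegree-$(1,1)$ component, a copy of $\langle X_1\rangle\otimes\langle X_2\rangle$, after which quotienting by $[R_1,F_2]+[R_2,F_1]$ gives precisely $L/L^2\otimes K/K^2$. (Alternatively one exhibits the explicit central extension of $L\oplus K$ by $L/L^2\otimes K/K^2$ with bracket $[(l,k),(l',k')]$ having third component $\bar l\otimes\bar k'-\bar l'\otimes\bar k$.) With that supplied, your argument is complete.
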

%----------------------------------------------------------------------
\begin{theorem}[\cite{pn.fr}]\label{dim Heisenberg}
Let $H(m)$ be the Heisenberg Lie algebra of dimension $2m+1$ with $m\geq1$. Then $\dim\M(H(1))=2$ and $\dim\M(H(m))=2m^2-m-1$, for $m\geq2$.
\end{theorem}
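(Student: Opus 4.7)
The plan is to invoke the Hopf formula $\M(L) = (R\cap F^2)/[R,F]$ with the minimal free presentation of $H(m)$ in which $F$ is free on generators $y_1,\ldots,y_{2m}$ mapping to $x_1,\ldots,x_{2m}$. Since these generators descend to a basis of $H(m)^{\mathrm{ab}}$ (which has dimension $2m$), the kernel $R$ lies inside $F^2$, so $R\cap F^2=R$; and since $H(m)$ is $2$-step nilpotent, $F^3\subseteq R$. The defining relations of $H(m)$ identify $R/F^3$ inside $F^2/F^3$ as the span of the commutators $[y_i,y_j]$ for $(i,j)$ not of the form $(2s-1,2s)$, together with the differences $[y_1,y_2]-[y_{2s-1},y_{2s}]$ for $s=2,\ldots,m$; this yields $\binom{2m}{2}-m+(m-1)=2m^2-m-1$ relations. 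Fixing a lift $V\subseteq F^2$ of these relations, one has $R=V\oplus F^3$ as vector spaces.

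The key step for $m\geq 2$ is to show $[R,F]=F^3$, for then the Hopf formula immediately gives $\M(H(m))\cong R/[R,F]\cong V$ of dimension $2m^2-m-1$. The containments $F^4=[F^3,F]\subseteq[R,F]\subseteq[F^2,F]=F^3$ are automatic, so it suffices to prove $[V,F]+F^4=F^3$, equivalently that each generator $[[y_a,y_b],y_c]$ of $F^3/F^4$ lies in $[V,F]$ modulo $F^4$. When $(a,b)$ is non-diagonal this is immediate. When $(a,b)=(2s-1,2s)$ and $c\notin\{2s-1,2s\}$, I would apply the Jacobi identity
\[ [[y_{2s-1},y_{2s}],y_c]=-[[y_{2s},y_c],y_{2s-1}]-[[y_c,y_{2s-1}],y_{2s}], \]
noting that a quick parity check makes both inner commutators $[y_{2s},y_c]$ and $[y_c,y_{2s-1}]$ non-diagonal, so both terms lie in $[V,F]$. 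Finally, when $c\in\{2s-1,2s\}$, I would use $m\geq 2$ to pick $s'\ne s$ with $c\notin\{2s'-1,2s'\}$: the previous case places $[[y_{2s'-1},y_{2s'}],y_c]$ in $[V,F]+F^4$, and the difference relations force $[[y_{2s-1},y_{2s}],y_c]\equiv[[y_{2s'-1},y_{2s'}],y_c]\pmod{[V,F]}$.

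The case $m=1$ must be handled separately, since the Jacobi argument above relies on a second ``diagonal'' pair being available. Here $V=0$, $R=F^3$, and $[R,F]=F^4$, so $\M(H(1))\cong F^3/F^4$ for the free Lie algebra on two generators; Witt's formula yields $\dim F^3/F^4=\tfrac13(2^3-2)=2$, matching the claim. The main obstacle I anticipate is the bookkeeping in the Jacobi reduction, verifying carefully that each commutator reshuffling genuinely lands in $[V,F]+F^4$ rather than only in $F^3$; the failure of that step at $m=1$ is exactly what produces the discontinuity in the formula.
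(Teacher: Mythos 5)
This theorem is one the paper explicitly quotes from Niroomand--Russo \cite{pn.fr} and states without proof, so there is no in-paper argument to compare against; judged on its own, your Hopf-formula proof is correct and complete. The presentation bookkeeping checks out: with $F$ free on $y_1,\dots,y_{2m}$ one indeed gets $R\subseteq F^2\,$, $F^3\subseteq R$, and $\dim R/F^3=\binom{2m}{2}-1=2m^2-m-1$ (your spanning set of non-diagonal brackets plus $m-1$ differences is exactly the kernel of $F^2/F^3\twoheadrightarrow H(m)^2$). The parity check in the Jacobi step is right: for $c\notin\{2s-1,2s\}$ neither $\{2s,c\}$ nor $\{c,2s-1\}$ can be a diagonal pair, and the reduction of the remaining case via a second diagonal pair and the difference relations is valid, giving $[R,F]=[V,F]+F^4=F^3$ and hence $\M(H(m))\cong V$ for $m\geq2$. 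The $m=1$ case via $R=F^3$, $[R,F]=F^4$ and Witt's formula $\tfrac13(2^3-2)=2$ is also correct, and your diagnosis of why the formula jumps at $m=1$ (no second diagonal pair to feed the Jacobi reduction, so $F^3/F^4$ survives into the multiplier) is exactly the right structural explanation. For comparison, the cited source obtains the same numbers homologically, essentially from the exact sequence relating $\M(L)$, $\M(L/Z)$ and $Z\cap L^2$ for the central ideal $Z=H(m)^2$ together with the Ganea-type map $Z\otimes L/L^2\to\M(L)$; your free-presentation computation is more hands-on but entirely self-contained, which is arguably preferable for a statement the paper otherwise leaves as a black box.
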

%----------------------------------------------------------------------
\begin{theorem}[\cite{hd.fs.me.1}]\label{dimA^2=1}
	Let $L$ be a $d$-dimensional nilpotent Lie algebra  with $\dim L^2=1$. Then $L\cong H(m)\oplus A(d-2m-1),$ for some $m\geq1$. Moreover, 
	\[\dim\M(L)=\begin{cases}\frac{1}{2}(d-1)(d-2)+1,&m=1,\\\frac{1}{2}(d-1)(d-2)-1,&m\geq2.\end{cases}\]
\end{theorem}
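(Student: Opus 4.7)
The plan is to first establish the structural isomorphism $L \cong H(m) \oplus A(d-2m-1)$ by symplectic-form arguments, and then to read off the multiplier dimension from Theorems~\ref{direct sum of multiplier} and~\ref{dim Heisenberg}.

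For the structural step, I would first argue that $L^2 \subseteq Z(L)$. Since $L$ is nilpotent and $L^3 = [L, L^2]$ is a subspace of the one-dimensional $L^2$, nilpotency forces $L^3 = 0$, so $L^2$ is central. Fix a generator $x$ of $L^2$. The bracket then descends to an alternating bilinear form $\omega \colon L \times L \to \mathcal{K}x$ whose radical equals $Z(L)$, so the induced form on $L/Z(L)$ is non-degenerate alternating and hence $\dim(L/Z(L)) = 2m$ is even. I would then pick a symplectic basis $\bar{x}_1, \ldots, \bar{x}_{2m}$ of $L/Z(L)$, lift it to $x_1, \ldots, x_{2m} \in L$ after rescaling so that $[x_{2i-1}, x_{2i}] = x$, and set $M = \gen{x, x_1, \ldots, x_{2m}} \cong H(m)$. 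Choosing a vector-space complement $Z'$ of $L^2$ inside $Z(L)$, one checks that $M \cap Z(L) = L^2$, so $M \cap Z' = 0$; a dimension count then gives $L = M \oplus Z'$, yielding $L \cong H(m) \oplus A(d-2m-1)$.

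For the multiplier, set $k = d - 2m - 1$. Using Theorem~\ref{direct sum of multiplier} together with $\dim(H(m)/H(m)^2) = 2m$, $\dim(A(k)/A(k)^2) = k$ and $\dim \M(A(k)) = \tfrac{1}{2}k(k-1)$, one obtains
\[
\dim \M(L) = \dim \M(H(m)) + \tfrac{1}{2}k(k-1) + 2mk.
\]
Substituting $\dim \M(H(1)) = 2$ when $m=1$ and simplifying yields $\tfrac{1}{2}(d-1)(d-2)+1$, while substituting $\dim \M(H(m)) = 2m^2 - m - 1$ from Theorem~\ref{dim Heisenberg} for $m \geq 2$ simplifies to $\tfrac{1}{2}(d-1)(d-2)-1$.

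The main obstacle is the structural first step: promoting a would-be central-product decomposition into a genuine direct sum. The delicate point is picking the central complement $Z'$ transverse to $L^2$ and then verifying $M \cap Z' = 0$; commutativity of the two summands is then automatic from $Z' \subseteq Z(L)$. Once this is in hand, the multiplier computation is a routine expansion, the only minor care being the case split $m = 1$ versus $m \geq 2$ imposed by Theorem~\ref{dim Heisenberg}.
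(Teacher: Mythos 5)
Your proof is correct. The paper itself states this theorem without proof (it is imported from the cited reference \cite{hd.fs.me.1}), and your argument is the standard one: the reduction $L^3=0$, the non-degenerate alternating form on $L/Z(L)$ giving the Heisenberg summand with abelian central complement, and then Theorems~\ref{direct sum of multiplier} and~\ref{dim Heisenberg} together with $\dim\M(A(k))=\tfrac12 k(k-1)$ — both case computations check out.
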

%----------------------------------------------------------------------
%%%%%%%%%%%%%%%%%%%%%%%%%%%%%%%%%%%%%%%%%%%%%%%%%%%%%%%%%%%%%%%%%%%%%%%%%%%%%
\section{Main results}

In this section, we classify all pairs of finite dimensional nilpotent Lie algebras $(N,L)$ with $s(N,L)=6, 7$.

Let $L$ be a finite dimensional nilpotent Lie algebra and $N$, $K$ be ideals of $L$ such that $L=N \oplus K$. Let $\dim N=n$ , $\dim K=m$ and $\dim N^2=d \geq 1$.
 We have $\dim \M(L)=\dim \M(N) + \dim \M(K) +(n-\dim N^2)(m-\dim K^2),$ by Theorem \ref{direct sum of multiplier}. Also  
$\dim \M(L)=\dim \M(N,L) + \dim \M(K)$
(see \cite{ha.fs.mr.ek} and \cite{fs.as.be}). Thus 
 \begin{equation}\label{equality}
	\dim \M(N,L)=\dim \M(N) +(n-\dim N^2)(m-\dim K^2).
\end{equation}

By using Theorem 3.1 of \cite{pn.fr},  $ \dim \M(N) \leq \frac{1}{2} (n+\dim N^2-2)(n-\dim N^2-1)+1$.
Therefore 
 $$ \dim \M(N,L)\leq \frac{1}{2} (n+\dim N^2-2)(n-\dim N^2-1)+1+m(n-\dim N^2).$$
As the above bound for $\dim \M(N,L)$  is decreasing with respect to $\dim N^2$, there exists an integer $s(N,L) \geq 0$ in such a way that
 $$ \dim \M(N,L)=\frac{1}{2} (n-1)(n-2)+1+m(n-1)-s(N,L).$$

By replacing $\dim \M(N)= \frac{1}{2}(n-1)(n-2)+1-s(N)$ in (\ref*{equality}), we have

 \begin{equation}\label{s1}
	(n-1)m -(n-\dim N^2)(m-\dim K^2)=s(N,L)-s(N).
\end{equation}
It is easy to see that

\begin{equation}\label{s2}
	s(N,L)-s(N) \geq m(\dim N^2-1),
\end{equation}
which implies that $s(N,L) \geq s(N)$.

The following theorem determines the structure of all pairs of  nilpotent Lie algebras $(N,L)$, when $s(N,L) \leq 2$. 

%----------------------------------------------------------------------
\begin{theorem} \cite{ek.sa}
	Let $(N,L)$ be a pair of finite dimensional non-abelian nilpotent Lie algebras and $K$ be an ideal of $L$ such that $L=N \oplus K$, $\dim N=n$, $\dim K=m$ and  $\dim \M(N,L)= \frac{1}{2}(n-1)(n-2)+1+(n-1)m-s(N,L)$. Then
	\begin{itemize}		
		\item[(i)]$s(N,L)=0$ if and only if $(N,L)\cong (H(1)\oplus A(n-3),H(1)\oplus A(n+m-3))$.
		\item[(ii)]$s(N,L)=1$ if and only if $(N,L)\cong (L_{5,8},L_{5,8})$. 
		\item[(iii)]$s(N,L)=2$ if and only if $(N,L)$ is isomorphic to 
    	$(H(1),H(1)\oplus H(r)\oplus A(m-2r-1)),  r\geq1$,
    	$(H(r)\oplus A(n-2r-1),H(r) \oplus A(m+n-2r-1))$,
    	$(L_{5,8} \oplus A(i),L_{5,8} \oplus A(1)), i=0,1$
    	or $(L_{4,3},L_{4,3})$.
	\end{itemize}	
\end{theorem}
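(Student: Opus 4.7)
The plan is to exploit the inequality $s(N,L) \geq s(N)$ coming from (\ref{s2}) to reduce the problem to finitely many candidates for $N$, each drawn from Theorem~\ref{s=0,1,2,3}. For each admissible $N$, the dimensions $m = \dim K$ and $\dim K^2$ of the complement are then pinned down by the identity (\ref{s1}); the algebra $K$ itself is recovered from its derived-algebra dimension via Theorem~\ref{dimA^2=1} when $\dim K^2 = 1$, and is abelian when $\dim K^2 = 0$. Since $L = N \oplus K$ is a direct sum of ideals, this determines $L$ up to isomorphism.

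I would then treat the cases $s(N,L) = 0, 1, 2$ in turn. For $s(N,L) = 0$ only $s(N) = 0$ is possible, so $N \cong H(1) \oplus A(n-3)$ with $\dim N^2 = 1$; substituting into (\ref{s1}) yields $(n-1)\dim K^2 = 0$, hence $K$ is abelian and $L \cong H(1) \oplus A(n+m-3)$. For $s(N,L) = 1$ the only option is $s(N) = 1$, so $N \cong L_{5,8}$; since $\dim L_{5,8}^2 = 2$, the bound (\ref{s2}) forces $m = 0$, producing the single pair $(L_{5,8}, L_{5,8})$. For $s(N,L) = 2$ I would split on $s(N) \in \{0, 1, 2\}$: the case $s(N) = 0$ reduces (\ref{s1}) to $(n-1)\dim K^2 = 2$, whose only solution with $n \geq 3$ is $n = 3$, $\dim K^2 = 1$, producing $(H(1), H(1) \oplus H(r) \oplus A(m-2r-1))$; the case $s(N) = 1$ (with $\dim N^2 = 2$) reduces to $m + 3\dim K^2 = 1$, giving $(L_{5,8}, L_{5,8} \oplus A(1))$; and the case $s(N) = 2$ splits further into $N \cong L_{4,3}$ and $N \cong L_{5,8} \oplus A(1)$ (both forcing $m = 0$ since $\dim N^2 = 2$) together with $N \cong H(r) \oplus A(n-2r-1)$, $r \geq 2$ (where $\dim N^2 = 1$ forces $K$ abelian via (\ref{s1})).

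For the converse, I would verify each listed pair by computing $\dim \M(N,L)$ directly from (\ref{equality}), combined with Theorems~\ref{direct sum of multiplier} and~\ref{dim Heisenberg}, and checking that $s(N,L)$ indeed takes the predicted value. The main obstacle I anticipate is not any single hard step but the bookkeeping: one must be systematic about all pairs $(\dim N^2, \dim K^2)$ compatible with the given value of $s(N,L) - s(N)$ in (\ref{s1}), and must carefully distinguish the case $\dim N^2 = 1$ (where $K$ may contain a Heisenberg summand and $m$ is unrestricted, producing one-parameter families) from the case $\dim N^2 \geq 2$ (where (\ref{s2}) sharply bounds $m$ and only isolated pairs arise). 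Getting this dichotomy right is precisely what separates the families indexed by $r$ from the sporadic pairs in the statement.
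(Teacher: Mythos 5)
Your proposal is correct and follows essentially the same approach the paper itself uses for the analogous classifications in Theorems \ref{s(N,L)=6} and \ref{s(N,L)=7}: bound $s(N)$ via (\ref{s2}), read off the candidates for $N$ from Theorem \ref{s=0,1,2,3}, and pin down $m$ and $\dim K^2$ from (\ref{s1}), invoking Theorem \ref{dimA^2=1} when $\dim K^2=1$. The paper states this particular theorem without proof (citing \cite{ek.sa}), but your case analysis reproduces the stated list correctly, the only cosmetic omission being the one-line check that $s(N)=0$ is impossible when $s(N,L)=1$ (it would force $n=2<3$).
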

%----------------------------------------------------------------------

%----------------------------------------------------------------------
Now, we are ready to state and prove our main results.
%--------------------------------------------------
\begin{theorem} \label{s(N,L)=6}
	Let $(N,L)$ be a pair of finite dimensional non-abelian nilpotent Lie algebras and $K$ be a non-zero ideal of $L$ such that $L=N \oplus K$, $\dim N=n$, $\dim K=m$ and  $\dim \M(N,L)= \frac{1}{2}(n-1)(n-2)+1+(n-1)m-s(N,L)$. Then $s(N,L)=6$ if and only if $(N,L)$ is isomorphic to one of the following pairs of nilpotent Lie algebras:\\	
	 $(H(1)\oplus A(4),H(1)\oplus H(r) \oplus A(m-2r+3))$, for all  $r \geq 1$,\\  
     $(H(1)\oplus A(1),H(1)\oplus A(1) \oplus K)$, for all nilpotent Lie algebra $K$ with $\dim K^2=2$,\\ 		    
     $ (H(1),H(1)\oplus K)$, for all nilpotent Lie algebra $K$ with $\dim K^2=3$,\\ 		      
	   $(H(2) ,H(2) \oplus H(r) \oplus A(m-2r-1))$, for all $r \geq 1$,\\  		      
	    $(L_{5,8} \oplus A(i),L_{5,8} \oplus A(5))$, for all  $0 \leq i \leq 4$,\\  	    
    	$(L_{4,3}\oplus A(i),L_{4,3}\oplus A(4))$,  for all  $0 \leq i \leq 3$,\\      	
	 $(L_{5,5}\oplus A(1),L_{5,5}\oplus A(3))$,  for all $0 \leq i \leq 2$,\\  	 
		$(L_{6,22}(\epsilon)\oplus A(i),L_{6,22}(\epsilon)\oplus A(3))$, for all $0 \leq i \leq 2$,\\  		
	$(L_{5,j},L_{5,j} \oplus A(1)),$  for $j=6,7,8$,\\ 		
 $(37A,37A\oplus A(1))$,\\ 	
$(L_{6,10}, L_{6,10} \oplus A(1))$.	
\end{theorem}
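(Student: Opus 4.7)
The argument I would follow rests on the inequality (\ref{s2}), which forces $s(N) \leq s(N,L) = 6$, so $N$ must belong to the finite list of nilpotent Lie algebras enumerated in Theorems \ref{s=0,1,2,3}, \ref{s=4,5}, and \ref{s=6,7}. My plan is to cycle through those candidates and, for each, reduce the problem to a linear Diophantine equation in the non-negative integer unknowns $m = \dim K$ and $\dim K^2$; the isomorphism type of $K$ can then be read off from that data.

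The key step is to rewrite the identity (\ref{s1}) in the form
$$(\dim N^2 - 1)\, m + (n - \dim N^2)\, \dim K^2 \;=\; 6 - s(N).$$
For each candidate $N$ I read off $n$, $\dim N^2$ and $s(N)$. When $N$ is a direct summand $N' \oplus A(i)$, the identity $s(N' \oplus A(i)) = s(N') + i(\dim (N')^2 - 1)$, a quick consequence of Theorem \ref{direct sum of multiplier}, lets me produce $s(N)$ uniformly across the families and keeps the bookkeeping short.

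The cases then separate naturally. When $\dim N^2 = 1$, the equation collapses to $(n-1)\dim K^2 = 6 - s(N)$; by Theorem \ref{dimA^2=1}, $N$ is Heisenberg-like with $s(N) \in \{0,2\}$, and the factorizations of $6$ (for $s(N) = 0$) and of $4$ (for $s(N) = 2$) produce the pairs involving $H(1) \oplus A(k)$ with $k \in \{0,1,4\}$ (where $K$ may be any nilpotent algebra of the prescribed $\dim K^2$) and the pair $(H(2),\, H(2) \oplus H(r) \oplus A(m-2r-1))$. When $\dim N^2 \geq 2$, the right-hand side is small and in almost every case the only solution is the abelian branch $\dim K^2 = 0$, giving $K \cong A(m)$ with $m = (6 - s(N))/(\dim N^2 - 1)$; this accounts for the families $(L_{5,8} \oplus A(i), L_{5,8} \oplus A(5))$, $(L_{4,3} \oplus A(i), L_{4,3} \oplus A(4))$, $(L_{5,5} \oplus A(i), L_{5,5} \oplus A(3))$, $(L_{6,22}(\epsilon) \oplus A(i), L_{6,22}(\epsilon) \oplus A(3))$, and the singleton pairs $(L_{5,j}, L_{5,j} \oplus A(1))$, $(37A, 37A \oplus A(1))$, $(L_{6,10}, L_{6,10} \oplus A(1))$.

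The main bookkeeping obstacle will be ruling out parasitic solutions. I must exclude $(m, \dim K^2)$ pairs to which no nilpotent Lie algebra corresponds (for instance $m = 2$, $\dim K^2 = 1$ is impossible since every two-dimensional nilpotent Lie algebra is abelian), verify that every candidate with $s(N) = 6$ and $\dim N^2 \geq 2$ forces $m = \dim K^2 = 0$ (so such $N$ contribute nothing with $K$ non-zero, while no $N$ with $s(N)=6$ has $\dim N^2 = 1$), and confirm via Theorem \ref{dimA^2=1} that whenever $\dim K^2 = 1$ one indeed has $K \cong H(r) \oplus A(m-2r-1)$ for some $r \geq 1$. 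Sufficiency is then an immediate computation: for each listed pair, (\ref{equality}) together with the known $\dim \M(N)$ yields $\dim \M(N,L) = \tfrac{1}{2}(n-1)(n-2) + 1 + (n-1)m - 6$, and hence $s(N,L) = 6$.
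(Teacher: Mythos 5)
Your proposal is correct and follows essentially the same route as the paper: bound $s(N)\leq 6$ via (\ref{s2}), run through the classifications in Theorems \ref{s=0,1,2,3}--\ref{s=6,7}, and for each candidate $N$ solve the linear relation (\ref{s1}) (your expanded form $(\dim N^2-1)m+(n-\dim N^2)\dim K^2=6-s(N)$ is exactly the equation the paper uses case by case) for $m$ and $\dim K^2$, discarding infeasible solutions and identifying $K$ via Theorem \ref{dimA^2=1} when $\dim K^2=1$. The only differences are organizational conveniences (the uniform formula for $s(N'\oplus A(i))$ and the explicit sufficiency check), which do not change the argument.
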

%----------------------------------------------------------------------
\begin{proof}
	
   Let $s(N,L)=6$. By using (\ref{s2}), $s(N)$ is equal to $0$, $1$, $2$, $3$, $4$, $5$ or $6$.
   First, assume that $s=0$. In this case, by Theorem \ref{s=0,1,2,3}, $N=H(1) \oplus A(n-3)$ and $\dim N^2=1$.
   From the relation (\ref{s1}), we have $(n-1)\dim K^2=6$. As $n \geq 3$, we have $n=3,4$ or $7$. If $n=7$, then $\dim K^2=1$. Theorem \ref{dimA^2=1} shows that $K \cong H(r) \oplus A(m-2r-1),$ for some $r\geq 1$. Therefore, $(N,L)\cong (H(1)\oplus A(4),H(1)\oplus H(r) \oplus A(m-2r+3)),  r \geq 1$. If $n=4$, then $\dim K^2=2$. Thus $(N,L)\cong (H(1)\oplus A(1), H(1)\oplus A(1) \oplus K)$. If, $n=3$, then $\dim K^2=3$. Hence $(N,L)\cong (H(1),H(1)\oplus K)$.
   
 %------------------------------------------------- 
  
   Now, let $s(N)=1$, then by Theorem \ref{s=0,1,2,3}, $N=L_{5,8}$. In this case, $n=5$, $\dim N^2=2$ and hence, $m=5-3\dim K^2$, by (\ref{s1}). On the other hand, the only acceptable value for $m$ is $5$. Therefore, $K=A(5)$ and $(N,L) \cong (L_{5,8},L_{5,8}\oplus A(5))$.
   
 %-------------------------------------------------   
  
   Assume that $s(N)=2$. By Theorem \ref{s=0,1,2,3}, $N \cong L_{4,3}$, $L_{5,8} \oplus A(1)$ or $H(r)\oplus A(n-2r-1),  r\geq2$. If $N$ is isomorphic to $L_{4,3}$, then $m=4-2\dim K^2$, by (\ref{s1}). But the only acceptable value for $m$ is $4$. So, $K=A(4)$ and $(N,L)$ is isomorphic to $(L_{4,3},L_{4,3}\oplus A(4))$.   
   If $N$ is isomorphic to $L_{5,8} \oplus A(1)$, then $m=4-4\dim K^2$, by (\ref{s1}). The only acceptable value for $m$ is $4$. Hence $K=A(4)$ and $(N,L)$ is isomorphic to $(L_{5,8} \oplus A(1),L_{5,8} \oplus A(5))$.
   If $N$ is isomorphic to $H(r)\oplus A(n-2r-1),  r\geq2$, then $(n-1)\dim K^2=4$, by (\ref{s1}). Since $n \geq 5$, the only acceptable values for $n$ and $\dim K^2$ are $5$ and $1$, respectively. Hence $N=H(2)$ and $K=H(r) \oplus A(m-2r-1),  r \geq 1$. Thus  $(N,L)$ is isomorphic to $(H(2) ,H(2) \oplus H(r) \oplus A(m-2r-1)), r \geq 1$.
   
   %-------------------------------------------------   
    
   Suppose that $s(N)=3$, then $N \cong L_{4,3}\oplus A(1), L_{5,5}, L_{6,22}(\epsilon), L_{6,26}$ or $L_{5,8} \oplus A(2)$,  by Theorem \ref{s=0,1,2,3}. If $N$ is isomorphic to $L_{4,3}\oplus A(1)$ or $L_{5,5}$, then $m=3-3\dim K^2$, by (\ref{s1}). But the only acceptable value for $m$ is $3$. Therefore $K=A(3)$ and $(N,L)$ is isomorphic to $(L_{4,3}\oplus A(1),L_{4,3}\oplus A(4))$ or $(L_{5,5},L_{5,5}\oplus A(3))$. 
   If $N$ is isomorphic to $L_{6,22}(\epsilon)$ or $L_{5,8} \oplus A(2)$, then $m=3-4\dim K^2$ or $m=3-5\dim K^2$, respectively, by (\ref{s1}). The only acceptable value for $m$ is $2$ for both cases. Therefore $(N,L)$ is isomorphic to $(L_{6,22}(\epsilon),L_{6,22}(\epsilon)\oplus A(3))$ or $(L_{5,8}\oplus A(2),L_{5,8}\oplus A(5))$, respectively.  
   If $N$ is isomorphic to $L_{6,26}$, then $2m=3-3\dim K^2$, by (\ref{s1}). Thus in this case, we get a contradiction.
   
    %-------------------------------------------------   
    
   Let $s(N)=4$. Then $N$ isomorphic to $L_{5,8}\oplus A(3)$, $L_{4,3}\oplus A(2)$, $L_{5,5}\oplus A(1)$, $L_{5,6}$, $L_{5,7}$,  $L_{5,9}$, $L_{6,22}(\epsilon)\oplus A(1)$ or $37A$, by Theorem \ref{s=4,5}.
   
   If $N$ is isomorphic to $L_{5,6}$, $L_{5,7}$ or $L_{5,9}$, then $m=1-\dim K^2$, by (\ref{s1}), and the only acceptable value for $m$ is $1$ for all three cases. Therefore, $(N,L)$ is isomorphic to $(L_{5,6},L_{5,6} \oplus A(1))$, $(L_{5,7},L_{5,7} \oplus A(1))$ or $(L_{5,9},L_{5,9} \oplus A(1))$. 
     
   If $N$ is isomorphic to $L_{4,3}\oplus A(2)$ or $L_{5,5}\oplus A(1)$, then $m=2-4\dim K^2$, by (\ref{s1}). But the only acceptable value for $m$ is $2$ for both cases. Hence $(N,L)$ is isomorphic to $(L_{4,3}\oplus A(2),L_{4,3}\oplus A(4))$ or $(L_{5,5}\oplus A(1),L_{5,5}\oplus A(3))$.
   If $N$ is isomorphic to $L_{5,8}\oplus A(3)$, $L_{6,22}(\epsilon)\oplus A(1)$ or $37A$, then $m=2-6\dim K^2$, $m=2-5\dim K^2$, $m=1-2\dim K^2$, respectively, by (\ref{s1}). One notes that the only acceptable value for $m$ is $2$, $2$ and $1$, respectively. Therefore, $(N,L)$ is isomorphic to $(L_{5,8}\oplus A(3), L_{5,8}\oplus A(5))$, $(L_{6,22}(\epsilon)\oplus A(1),L_{6,22}(\epsilon)\oplus A(3))$ or $(37A,37A\oplus A(1))$, respectively.
    %------------------------------------------------- 
   Assume that $s(N)=5$. Then $N$ isomorphic to $L_{5,8}\oplus A(4)$, $L_{4,3}\oplus A(3)$, $L_{5,5}\oplus A(2)$, $L_{6,22}(\epsilon)\oplus A(2)$, $L_{6,26}\oplus A(1)$, $L_{6,10}$,  $L_{6,23}$, $L_{6,25}$, $L_{6,27}$, $37B$ or $37D$, by Theorem \ref{s=4,5}. If $\dim N^2=3$, then  we have a contradiction, by (\ref{s1}). Thus $\dim N^2=2$ and $K=A(1)$. In this case, $(N,L)$ is isomorphic to one of the following pairs.
   
   $(L_{5,8}\oplus A(4), L_{5,8}\oplus A(5))$, $(L_{4,3}\oplus A(3), L_{4,3}\oplus A(4))$, $(L_{5,5}\oplus A(2), L_{5,5}\oplus A(3))$, $(L_{6,22}(\epsilon)\oplus A(2), L_{6,22}(\epsilon)\oplus A(3))$,  $(L_{6,10}, L_{6,10} \oplus A(1))$.
   
   Finally, let $s(N)=6$. Then by using Theorem \ref{s=6,7}, $N$ isomorphic to $L_{5,8}\oplus A(5)$, $L_{4,3}\oplus A(4)$,	$L_{5,5}\oplus A(3)$, $L_{6,22}(\epsilon)\oplus A(3)$, 	$L_{6,10}\oplus A(1)$,  $27A$,  $157$,  $37A \oplus A(1)$, $L_{6,6}$,  $L_{6,7}$,  $L_{6,9}$, $L_{6,11}$, $L_{6,12}$,	$L_{6,19}(\epsilon)$, $L_{6,20}$ or $L_{6,24}(\epsilon)$. In this case, $\dim N^2 \geq 2$ and using the relation (\ref{s1}), we conclude that $L=N$.
   This completes the proof. 			 
\end{proof}

 \begin{theorem} \label{s(N,L)=7}
 	Let $(N,L)$ be a pair of finite dimensional non-abelian nilpotent Lie algebras and $K$ be a non-zero ideal of $L$ such that $L=N \oplus K$, $\dim N=n$, $\dim K=m$ and  $\dim \M(N,L)= \frac{1}{2}(n-1)(n-2)+1+(n-1)m-s(N,L)$. Then $s(N,L)=7$ if and only if $(N,L)$ is isomorphic to one of the following pairs of Lie algebras:\\
 	 $(H(1)\oplus A(5),H(1)\oplus H(r) \oplus A(m-2r+4))$,  for all $r \geq 1$,\\  	 
     $(H(2) \oplus A(1) ,H(2) \oplus H(r) \oplus A(m-2r))$, for all $r \geq 1$,\\ 
 	 $(L_{5,8} \oplus A(i),L_{5,8} \oplus A(6))$, for all $0 \leq i \leq 5$,\\   
     $(L_{4,3},L_{4,3}\oplus H(1))$,\\ 
 	 $(L_{4,3} \oplus A(i),L_{4,3} \oplus A(5))$, for all $0 \leq i \leq 4$,\\  
 	$(L_{6,22}(\epsilon)\oplus A(i),L_{6,22}(\epsilon)\oplus A(4))$, for all $0 \leq i \leq 3$,\\  
 	 $(L_{5,5}\oplus A(i),L_{5,5}\oplus A(4))$, for all $0 \leq i \leq 3$,\\ 
 	 $(L_{6,26}\oplus A(i), L_{6,26}\oplus A(2))$, for $i =0,1$,\\ 
 	  	$(L_{6,10}\oplus A(i),L_{6,10}\oplus A(2))$,  for $ i =0,1$,\\ 
 	$(L_{6,j}, L_{6,j}\oplus A(1))$, for $j=23,25,27$,\\   
 	 $(37B,37B \oplus A(1))$,\\ 	 
 	    $(37D,37D \oplus A(1))$,\\
  $(27A,27A \oplus A(1))$,\\
 $(157,157 \oplus A(1))$.			
 \end{theorem}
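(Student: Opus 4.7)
The plan is to follow verbatim the strategy of Theorem~\ref{s(N,L)=6}. Setting $d := \dim N^2$, equation~(\ref{s1}) rearranges as
\[m(d-1) + (n-d)\dim K^2 = s(N,L) - s(N),\]
so combined with the bound (\ref{s2}) we have $s(N) \leq s(N,L) = 7$, and for each $s(N) \in \{0,1,\ldots,7\}$ the classifications in Theorems~\ref{s=0,1,2,3}, \ref{s=4,5} and \ref{s=6,7} supply a finite list of candidates for $N$. For every such candidate the Diophantine equation above pins down the admissible pairs $(m,\dim K^2)$, and then Theorem~\ref{dimA^2=1} (when $\dim K^2 = 1$) or elementary inspection (when $\dim K^2 = 0$ or $\geq 2$) determines the isomorphism type of the non-zero ideal $K$.

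I would work through the eight cases $s(N) = 0, 1, \ldots, 7$ in turn. For instance, $s(N) = 0$ forces $N \cong H(1) \oplus A(n-3)$ with $d = 1$, so the constraint reduces to $(n-1)\dim K^2 = 7$; since $n \geq 3$ this gives only $n = 8$, $\dim K^2 = 1$, hence $K \cong H(r)\oplus A(m-2r-1)$ and the family $(H(1)\oplus A(5),\, H(1)\oplus H(r)\oplus A(m-2r+4))$ via Theorem~\ref{dimA^2=1}. The case $s(N) = 2$ with $N = L_{4,3}$ ($n=4, d=2$) produces $m + 2\dim K^2 = 5$, whose admissible integer solutions $(5,0)$ and $(3,1)$ yield both $(L_{4,3}, L_{4,3}\oplus A(5))$ and the exceptional non-abelian pair $(L_{4,3}, L_{4,3}\oplus H(1))$. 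The remaining intermediate cases are handled identically: one reads off a small list of integer solutions from the Diophantine equation, converts each into a pair via Theorem~\ref{dimA^2=1}, and then packages pairs sharing a common $L$ into the compact families $(N\oplus A(i), L)$ displayed in the statement. Finally, $s(N) = 7$ is vacuous: every $N$ in Theorem~\ref{s=6,7}(ii) has $d \geq 2$, so the equation becomes $m(d-1) + (n-d)\dim K^2 = 0$ with $m \geq 1$, which is impossible.

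The main obstacle is not conceptual but purely organizational. The cases $s(N) = 4, 5, 6$ each contain many candidate algebras $N$, and for each I must solve the Diophantine constraint, exclude spurious pairs $(m,\dim K^2)$ for which no nilpotent $K$ of the prescribed dimensions exists (for example $\dim K = 2$ with $\dim K^2 = 1$, or $\dim K^2 \geq 2$ with $m$ below the minimum dimension of such an algebra), and then collect the surviving pairs into the compact families of the statement. A final sanity check would compare the resulting list with Theorem~\ref{s(N,L)=6} to confirm that the families behave consistently under incrementing $s(N,L)$ by one, in particular that the $s(N)=6$ line of that proof (which collapsed to $L=N$) becomes the source of the new $s(N,L)=7$ pairs with $K = A(1)$.
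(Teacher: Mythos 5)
Your proposal follows the paper's argument exactly: bound $s(N)\leq 7$ via (\ref{s2}), read off the candidates for $N$ from Theorems \ref{s=0,1,2,3}, \ref{s=4,5} and \ref{s=6,7}, and for each candidate solve the constraint $m(d-1)+(n-d)\dim K^2=7-s(N)$ coming from (\ref{s1}), discarding pairs $(m,\dim K^2)$ admitting no nilpotent $K$; your worked cases ($s(N)=0$, the $L_{4,3}$ subcase producing $(L_{4,3},L_{4,3}\oplus H(1))$, and the vacuity of $s(N)=7$ when $K\neq 0$) all agree with the paper. One remark: carried out faithfully, the $s(N)=1$ case ($N=L_{5,8}$, so $m+3\dim K^2=6$) also yields $(L_{5,8},L_{5,8}\oplus H(1))$, a pair the paper's own proof derives but which is absent from the theorem's displayed list, so your method matches the paper's proof even where that proof disagrees with the printed statement.
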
 
     
  \begin{proof}  
By using (\ref{s2}) and assuming $s(N,L)=7$, we obtain $s(N)$ is equal to $0$, $1$, $2$, $3$, $4$, $5$, $6$ or $7$.
For the case $s=0$, Theorem \ref{s=0,1,2,3} implies that $N=H(1) \oplus A(n-3)$ and $\dim N^2=1$.
Using the relation (\ref{s1}), we conclude that $(n-1)\dim K^2=7$. As $n \geq 3$, we have $n=8$ and $\dim K^2=1$. Applying Theorem \ref{dimA^2=1}, we obtain that $K \cong H(r) \oplus A(m-2r-1)$, for some $r\geq 1$. Therefore, $(N,L)\cong (H(1)\oplus A(5),H(1)\oplus H(r) \oplus A(m-2r+4)), r \geq 1$.
 %------------------------------------------------- 
Now, assume that $s(N)=1$. By Theorem \ref{s=0,1,2,3}, we have $N=L_{5,8}$. In this case, $n=5$, $\dim N^2=2$ and hence $m=6-3\dim K^2$, by (\ref{s1}). But the only acceptable values for $m$ are $6$ and $3$ and so $K=A(6)$ and $K=H(1)$, respectively. Thus $(N,L)$ is isomorphic to $(L_{5,8},L_{5,8}\oplus A(6))$ or $(L_{5,8},L_{5,8}\oplus H(1))$.
 %------------------------------------------------- 
Let $s(N)=2$. Using Theorem \ref{s=0,1,2,3}, we conclude that $N \cong L_{4,3}, L_{5,8} \oplus A(1)$ or $H(r)\oplus A(n-2r-1), r\geq2$. If $N$ is isomorphic to $L_{4,3}$, then $m=5-2\dim K^2$ by (\ref{s1}). Note that the only acceptable values for $m$ are $5$ and $3$. Hence $K=A(5)$ and $K=H(1)$, respectively. Thus $(N,L)$ is isomorphic to $(L_{4,3},L_{4,3}\oplus A(5))$ or $(L_{4,3},L_{4,3}\oplus H(1))$. 

If $N$ is isomorphic to $L_{5,8} \oplus A(1)$, then $m=5-4\dim K^2$, by (\ref{s1}). The only acceptable value for $m$ is $5$. Therefore, $K=A(5)$ and $(N,L)$ is isomorphic to $(L_{5,8} \oplus A(1),L_{5,8} \oplus A(6))$.
If $N$ is isomorphic to $H(r)\oplus A(n-2r-1), r\geq2$, then $(n-1)\dim K^2 =5$, by (\ref{s1}). Since $n \geq 5$, the only acceptable values for $n$ and $\dim K^2$ are $n=6$ and $\dim K^2=1$. Hence $N=H(2)\oplus A(1)$ and $K=H(r) \oplus A(m-2r-1), r \geq 1$. Thus  $(N,L)$ is isomorphic to $(H(2) \oplus A(1)$ or $H(2) \oplus H(r) \oplus A(m-2r)), r\geq 1$.
 %------------------------------------------------- 
Assume that $s(N)=3$, then $N \cong L_{4,3}\oplus A(1), L_{5,5}, L_{6,22}(\epsilon), L_{6,26}$ or $L_{5,8} \oplus A(2)$, by Theorem \ref{s=0,1,2,3}. If $N$ is isomorphic to $L_{4,3}\oplus A(1)$ or $L_{5,5}$, then $m=4-3\dim K^2$, by (\ref{s1}). The only acceptable value for $m$ is $4$. Therefore, $K=A(4)$ and $(N,L)$ is isomorphic to $(L_{4,3}\oplus A(1),L_{4,3}\oplus A(5))$ or $(L_{5,5},L_{5,5}\oplus A(4))$.

If $N$ is isomorphic to $L_{6,22}(\epsilon)$, $L_{6,26}$ or $L_{5,8} \oplus A(2)$, then $m=4-4\dim K^2$,  $2m=4-3\dim K^2$  or $m=4-5\dim K^2$, respectively, by (\ref{s1}). But the only acceptable values for $m$ are $4$, $2$ and $4$, respectively. Therefore $(N,L)$ is isomorphic to $(L_{6,22}(\epsilon),L_{6,22}(\epsilon)\oplus A(4))$ or $(L_{6,26},L_{6,26}\oplus A(2))$ and $(L_{5,8}\oplus A(2),L_{5,8}\oplus A(6))$. 
 %------------------------------------------------- 
Let $s(N)=4$. Then $N$ is isomorphic to $L_{5,8}\oplus A(3)$, $L_{4,3}\oplus A(2)$, $L_{5,5}\oplus A(1)$, $L_{5,6}$, $L_{5,7}$,  $L_{5,9}$, $L_{6,22}(\epsilon)\oplus A(1)$ or $37A$, by Theorem \ref{s=4,5}.

If $N$ is isomorphic to $L_{5,6}$, $L_{5,7}$ or $L_{5,9}$, then $2m=3-2\dim K^2$, by (\ref{s1}). In this case,  we have a contradiction.
If $N$ is isomorphic to $L_{4,3}\oplus A(2)$ or $L_{5,5}\oplus A(1)$, then $m=3-4\dim K^2$, by (\ref{s1}). The only acceptable value for $m$ is $3$ for both cases. Therefore, $(N,L)$ is isomorphic to $(L_{4,3}\oplus A(2),L_{4,3}\oplus A(5))$ or $(L_{5,5}\oplus A(1),L_{5,5}\oplus A(4))$.
If $N$ is isomorphic to $L_{5,8}\oplus A(3)$ or $L_{6,22}(\epsilon)\oplus A(1)$, then $m=3-6\dim K^2$, $m=3-5\dim K^2$, respectively, by (\ref{s1}). On the other hand, the only acceptable value for $m$ is $3$ for both cases. Therefore, $(N,L)$ is isomorphic to $(L_{5,8}\oplus A(3), L_{5,8}\oplus A(6)), $ $(L_{6,22}(\epsilon)\oplus A(1),L_{6,22}(\epsilon)\oplus A(4))$, respectively.
If $N$ is isomorphic to $37A$, then $2m=3-4\dim K^2$, by (\ref{s1}). In this case,  we have a contradiction.

 %------------------------------------------------- 
If $s(N)=5$, then $N$ is isomorphic to $L_{5,8}\oplus A(4)$, $L_{4,3}\oplus A(3)$, $L_{5,5}\oplus A(2)$, $L_{6,22}(\epsilon)\oplus A(2)$, $L_{6,26}\oplus A(1)$, $L_{6,10}$,  $L_{6,23}$, $L_{6,25}$, $L_{6,27}$, $37B$ or $37D$, by Theorem \ref{s=4,5}.
If $N$ is isomorphic to $L_{5,8}\oplus A(4)$, $L_{6,10}$ or $L_{6,22}(\epsilon)\oplus A(2)$, then $m=2-7\dim K^2$, $m=2-6\dim K^2$,  $m=2-4\dim K^2$, respectively, by (\ref{s1}). The only acceptable value for $m$ is $2$ for all three cases. Therefore $(N,L)$ is isomorphic to $(L_{5,8}\oplus A(4), L_{5,8}\oplus A(6))$, $L_{6,10}, L_{6,10} \oplus A(2)$,  $(L_{6,22}(\epsilon)\oplus A(2),L_{6,22}(\epsilon)\oplus A(4))$, respectively.
If $N$ is isomorphic to $L_{4,3}\oplus A(3)$ or $L_{5,5}\oplus A(2)$, then $m=2-5\dim K^2$, by (\ref{s1}). In this case, $(N,L)$ is isomorphic to $(L_{4,3}\oplus A(3), L_{4,3}\oplus A(5))$ or  $(L_{5,5}\oplus A(2),L_{5,5}\oplus A(4))$.
If $N$ is isomorphic to $L_{6,26}\oplus A(1)$, $37B$ or $37D$, then $m=1-2\dim K^2$, by (\ref{s1}) and hence $(N,L)$ is isomorphic to $(L_{6,26}\oplus A(1), L_{6,26}\oplus A(2))$, $(37B,37B \oplus A(1))$ or   $(37D,37D \oplus A(1))$.
If $N$ is isomorphic to $L_{6,23}$, $L_{6,25}$ or $L_{6,27}$, then $2m=2-3\dim K^2$, by (\ref{s1}). But the only acceptable value for $m$ is $1$ for all three cases. In this case, $(N,L)$ is isomorphic to $(L_{6,23}, L_{6,23}\oplus A(1))$, $(L_{6,25}, L_{6,25}\oplus A(1))$ or  $(L_{6,27}, L_{6,27}\oplus A(1))$.
 %------------------------------------------------- 
Suppose that $s(N)=6$. Then $N$ is isomorphic to $L_{5,8}\oplus A(5)$, $L_{4,3}\oplus A(4)$,	$L_{5,5}\oplus A(3)$, $L_{6,22}(\epsilon)\oplus A(3)$,  $L_{6,10}\oplus A(1)$,  $27A$,  $157$,  $37A \oplus A(1)$, $L_{6,6}$,  $L_{6,7}$,  $L_{6,9}$, $L_{6,11}$, $L_{6,12}$,	$L_{6,19}(\epsilon)$, $L_{6,20}$ or $L_{6,24}(\epsilon)$, by Theorem \ref{s=6,7}.

If $\dim N^2=3$ then  we obtain a contradiction, by (\ref{s1}). Thus $\dim N^2=2$ and $K=A(1)$. In this case, $(N,L)$ is isomorphic to one of the following pairs.

$(L_{5,8}\oplus A(5),L_{5,8}\oplus A(6))$, $(L_{4,3}\oplus A(4),L_{4,3}\oplus A(5))$,	$(L_{5,5}\oplus A(3),L_{5,5}\oplus A(4))$, $(L_{6,22}(\epsilon)\oplus A(3),L_{6,22}(\epsilon)\oplus A(4))$, 	$(L_{6,10}\oplus A(1),L_{6,10}\oplus A(2))$,  $(27A,27A \oplus A(1))$ or  $(157,157 \oplus A(1))$.

Finally, If $s(N)=7$, then by using Theorem \ref{s=6,7}, $N$ isomorphic to 	$L_{5,8}\oplus A(6)$, $L_{4,3}\oplus A(5)$,	$L_{5,5}\oplus A(4)$, $L_{6,22}(\epsilon)\oplus A(4)$,  $27B$, $L_{6,10}\oplus A(2)$, $27A \oplus A(1)$,  $157 \oplus A(1)$, $L_{6,10} \dotplus H(1)$,  $H(1)\oplus H(2)$, $S_1$, $S_2$, $S_3$, $L_{6,23} \oplus A(1)$,  $L_{6,25} \oplus A(1)$, $37B \oplus A(1)$, $37C \oplus A(1)$, $37D \oplus A(1)$, $L_{6,26} \oplus A(2)$, $L_{6,13}$, $257A$,  $257C$, $257F$ or $L_{6,21}(\epsilon)$.	
In this case, $\dim N^2 \geq 2$. Using the relation (\ref{s1}), we conclude that $L=N$, 
which completes the proof. 						
\end{proof}

In the following theorem, we classify all pairs of nilpotent Lie algebras $(N,L)$ with $s(N,L)=3, 4, 5$. Due to the similarity of the proofs with the ones of Theorems \ref{s(N,L)=6} and \ref{s(N,L)=7}, we omit the proofs.

\begin{theorem}
	Let $(N,L)$ be a pair of finite dimensional non-abelian nilpotent Lie algebras and $K$ be a non-zero ideal of $L$ such that $L=N \oplus K$, $\dim N=n$, $\dim K=m$ and  $\dim \M(N,L)= \frac{1}{2}(n-1)(n-2)+1+(n-1)m-s(N,L)$. Then
	\begin{itemize}	
		
		\item[(i)]$s(N,L)=3$ if and only if $(N,L)$ is isomorphic to
		$(H(1)\oplus A(1),H(1)\oplus H(r) \oplus A(m-2r)), r \geq 1$, $(L_{5,8} \oplus A(i),L_{5,8} \oplus A(2)), 0 \leq i \leq 2$, or	$(L_{4,3}\oplus A(i),L_{4,3}\oplus A(1)), i=0,1$.
		
		\item[(ii)]$s(N,L)=4$ if and only if $(N,L)$ is isomorphic to one of the following pairs of Lie algebras:\\
		$(H(1)\oplus A(2),H(1)\oplus H(r) \oplus A(m-2r+1))$, for all  $r \geq 1$,\\
		$(L_{5,8} \oplus A(i),L_{5,8} \oplus A(3))$, for all  $0 \leq i \leq 3$,\\
		$(L_{4,3}\oplus A(i),L_{4,3}\oplus A(2))$, for all $0 \leq i \leq 2$,\\
		$(L_{5,5}\oplus A(i),L_{5,5}\oplus A(1))$, for $ i = 0, 1$,\\
		$(L_{6,22}(\epsilon)\oplus A(i),L_{6,22}(\epsilon)\oplus A(1))$, for $i = 0, 1$.
		
		\item[(iii)]$s(N,L)=5$ if and only if $(N,L)$ is isomorphic to one of the following pairs of Lie algebras:\\
		$(H(1)\oplus A(3),H(1)\oplus H(r)\oplus A(m-2r+2))$, for all $r \geq 1$,\\
		$(L_{5,8} \oplus A(i),L_{5,8} \oplus A(4))$, for all  $0 \leq i \leq 4$,\\	$(L_{4,3}\oplus A(i),L_{4,3}\oplus A(3))$, for all $0 \leq i \leq 3$,\\
		$(L_{5,5}\oplus A(i),L_{5,5}\oplus A(2))$, for all  $0 \leq i \leq 2$,\\
		$(L_{6,22}(\epsilon)\oplus A(i),L_{6,22}(\epsilon)\oplus A(2))$, for all $0 \leq i \leq 2$,\\
		$(L_{6,26}\oplus A(i),L_{6,26}\oplus A(1))$, for $ i= 0, 1$.	
	\end{itemize}	
\end{theorem}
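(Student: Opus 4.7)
The plan is to run the same case analysis template as in the proofs of Theorems \ref{s(N,L)=6} and \ref{s(N,L)=7}, now specialised to the smaller target values $s(N,L)\in\{3,4,5\}$. Inequality (\ref{s2}) gives the a priori bound $s(N)\leq s(N,L)$, so in each part $s(N)$ ranges only over finitely many values; for each such value Theorems \ref{s=0,1,2,3} and \ref{s=4,5} supply a finite list of candidates for $N$, together with the invariants $n=\dim N$ and $\dim N^2$.

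For each candidate $N$ I would plug those invariants into equation (\ref{s1}), which rearranges as
\begin{equation*}
m(\dim N^2-1)+(n-\dim N^2)\dim K^2=s(N,L)-s(N),
\end{equation*}
and solve over the nonnegative integers $m$ and $\dim K^2$. Once an admissible pair $(m,\dim K^2)$ is found, the complement $K$ is determined by its invariants: if $\dim K^2=0$ then $K\cong A(m)$, while if $\dim K^2=1$ Theorem \ref{dimA^2=1} forces $K\cong H(r)\oplus A(m-2r-1)$ for some $r\geq 1$. For $s(N,L)\leq 5$ the budget $s(N,L)-s(N)$ is small enough that no solutions with $\dim K^2\geq 2$ survive, which is why the three lists contain only abelian and Heisenberg complements. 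Pairs violating the dimensional constraints built into the structure of $N$ (for example $n\geq 5$ when $N\cong H(r)\oplus A(n-2r-1)$ with $r\geq 2$) would be discarded, as would any resulting equation of the form $2m+c\cdot\dim K^2=c'$ with no nonnegative integer solution; these are precisely the rejections one sees in the $L_{5,6},L_{5,7},L_{5,9},37A,L_{6,23},L_{6,25},L_{6,27}$-type cases of the neighbouring theorems.

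The converse direction is immediate: for each listed pair the value of $\dim\M(N,L)$ can be read off from (\ref{equality}) together with Theorems \ref{direct sum of multiplier} and \ref{dim Heisenberg}, and a direct arithmetic check confirms the claimed value of $s(N,L)$. The principal obstacle is purely organisational, namely keeping track of the growing list of candidate $N$ as $s(N)$ approaches $s(N,L)$. In particular one must be alert to the vacuous strata where $s(N)=s(N,L)$: by (\ref{s2}) these require $\dim N^2=1$ (when $m\geq 1$), and one checks from Theorems \ref{s=0,1,2,3} and \ref{s=4,5} that no nilpotent Lie algebra with $s(N)=3,4,5$ satisfies $\dim N^2=1$, so these strata contribute nothing. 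With this bookkeeping the analysis is a direct parallel of the two preceding theorems, justifying the authors' decision to omit the details.
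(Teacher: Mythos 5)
Your template is exactly the one the paper itself relies on (it omits this proof by pointing to Theorems \ref{s(N,L)=6} and \ref{s(N,L)=7}), your rearrangement of (\ref{s1}) as $m(\dim N^2-1)+(n-\dim N^2)\dim K^2=s(N,L)-s(N)$ is correct, and so is your disposal of the strata with $s(N)=s(N,L)$. However, one concrete step fails: the claim that for $s(N,L)\leq 5$ ``no solutions with $\dim K^2\geq 2$ survive.'' Take $s(N,L)=4$ and $s(N)=0$, so $N\cong H(1)\oplus A(n-3)$, $\dim N^2=1$ and (\ref{s1}) reads $(n-1)\dim K^2=4$. Besides $n=5$, $\dim K^2=1$ (which yields the Heisenberg-complement family in the list), this has the solution $n=3$, $\dim K^2=2$, i.e.\ $N\cong H(1)$ and $K$ an arbitrary nilpotent algebra with two-dimensional derived subalgebra. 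These pairs genuinely occur: by (\ref{equality}), $\dim\M(H(1),H(1)\oplus K)=2+2(m-2)=2m-2$, whence $s(N,L)=1+1+2m-(2m-2)=4$; for instance $(H(1),H(1)\oplus L_{4,3})$. This family is exactly analogous to the $n=3,4$ subcases that Theorem \ref{s(N,L)=6} does record in its $s(N)=0$ stratum, yet it appears neither in your argument nor in the statement of part (ii); so either the printed list is incomplete or your dismissal of $\dim K^2\geq 2$ must be replaced by the actual divisor analysis of $(n-1)\dim K^2=s(N,L)$ in each part. (For $s(N,L)=3$ and $5$ the right-hand side is prime and $n-1\geq 2$, so only $\dim K^2=1$ can occur and your conclusion does hold there.)

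A second, smaller mismatch: you (correctly, given the hypothesis $K\neq 0$) discard the strata with $s(N)=s(N,L)$, but the stated ranges of $i$ in the theorem include the endpoints with $K=0$, e.g.\ $(L_{5,8}\oplus A(2),L_{5,8}\oplus A(2))$ in part (i) and $(L_{5,5}\oplus A(1),L_{5,5}\oplus A(1))$ in part (ii); your proof therefore yields a list differing from the printed one at these degenerate endpoints. This is a defect of the statement rather than of your argument --- the corresponding ranges in Theorems \ref{s(N,L)=6} and \ref{s(N,L)=7} exclude such endpoints --- but it should be flagged rather than silently absorbed.
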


For more information of readers, the classification of all Lie algebras, which are used in this paper have been collected in the following Table 1.

\begin{longtable} {|p{2.1cm}|p{9.8cm}|}
	\caption{}
	\label{tab:table3}\\
	\hline
	\textbf{Lie algebras} & \textbf{Non-zero Lie multiplications}  \\
	\hline
	$L_{4,3}$&$[e_1,e_2]=e_3$,\ $[e_1,e_3]=e_4$\\
	\hline
	$L_{5,5}$&$[e_1,e_2]=e_3$,\ $[e_1,e_3]=[e_2,e_4]=e_5$\\
	\hline
	$L_{5,6} $& $ [e_1,e_2]=e_3, [e_1,e_3]=e_4, [e_1,e_4]=[e_2,e_3] = e_5$ \\
	\hline 
	$L_{5,7}$&$ [e_1,e_2]=e_3, [e_1,e_3]=e_4, [e_1,e_4]=e_5$\\   
	\hline 
	$L_{5,8}$&$[e_1,e_2]=e_4$,\ $[e_1,e_3]=e_5$\\
	\hline 
	$L_{5,9}$&$[e_1,e_2]=e_3, [e_1,e_3]=e_4, [e_2,e_3]=e_5$\\
	\hline 
	$L_{6,10} $& $ [e_1,e_2]=e_3, [e_1,e_3]= [e_4,e_5]=e_6$\\
	\hline 
	$L_{6,11}$&$[e_1,e_2]=e_3,[e_1,e_3]=e_4, [e_1,e_4]=[e_2,e_3]=[e_2,e_5]=e_6$\\
	\hline 
	$L_{6,12} $&$ [e_1,e_2]=e_3,[e_1,e_3]=e_4, [e_1,e_4]= [e_2,e_5]=e_6$ \\		
	\hline 
	$L_{6,13} $&$ [e_1,e_2]=e_3,[e_1,e_3]=[e_2,e_4]=e_5, [e_1,e_5]=[e_3,e_4]=e_6$ \\		
	\hline 
	$L_{6,20} $& $[e_1,e_2]=e_4, [e_1,e_3]=e_5, [e_1,e_5] =[e_2,e_4]=e_6$ \\
	\hline 
	$L_{6,23} $& $ [e_1,e_2]=e_3, [e_1,e_3]=[e_2,e_4] = e_5, [e_1,e_4]=e_6 $ \\
	\hline 
	$L_{6,25} $&$ [e_1,e_2]=e_3, [e_1,e_3]=e_5, [e_1,e_4]=e_6$ \\
	\hline 
	$L_{6,26} $&$ [e_1,e_2]=e_4, [e_1,e_3]=e_5, [e_2,e_4]=e_6$ \\
	\hline 
	$L_{6,27} $& $ [e_1,e_2]=e_3, [e_1,e_3]=e_5, [e_2,e_4]=e_6$\\ 
	\hline 
	$L_{6,19}(\epsilon) $&$[e_1,e_2]=e_4,[e_1,e_3]=e_5,[e_1,e_5]=[e_2,e_4]=e_6,[e_3,e_5]=\epsilon e_6$\\		
	\hline
	$L_{6,21}(\epsilon) $&$[e_1,e_2]=e_3,[e_1,e_3]=e_4,[e_2,e_3]=e_5, [e_1,e_4]=e_6,[e_2,e_5]=\epsilon e_6$\\			
	\hline 
	$L_{6,22}(\epsilon) $& $ [e_1,e_2]=[e_3,e_4]=e_5, [e_1,e_3]= e_6 , [e_2,e_4]=\epsilon e_6 $ \\	
	\hline
	$L_{6,24}(\epsilon) $&$ [e_1,e_2]=e_3,[e_1,e_3]=[e_2,e_4]=e_5,[e_2,e_3]=e_6,[e_1,e_4]=\epsilon e_6$\\	
	\hline
	$37A$&$[e_1,e_2]=e_5,[e_2,e_3]=e_6,[e_2,e_4]=e_7$\\			
	\hline
	$37B$&$[e_1,e_2]=e_5,[e_2,e_3]=e_6,[e_3,e_4]=e_7$\\		
	\hline
	$37C$&$[e_1,e_2]=[e_3,e_4]=e_5,[e_2,e_3]=e_6,[e_2,e_4]=e_7$\\		
	\hline
	$37D$&$[e_1,e_2]=[e_3,e_4]=e_5,[e_1,e_3]=e_6,[e_2,e_4]=e_7$\\				
	\hline
	$27A$&$[e_1,e_2]=e_6,[e_1,e_4]=[e_3,e_5]=e_7$\\	
	\hline	
	$27B$&$[e_1,e_2]=[e_3,e_4]=e_6, [e_1,e_5]=[e_2,e_3]=e_7$\\	
	\hline	
	$157$&$[e_1,e_2]=e_3,[e_1,e_3]=[e_2,e_4]=[e_5,e_6]=e_7$\\	
	\hline	
	$257A$&$[e_1,e_2]=e_3,[e_1,e_3]=[e_2,e_4]=e_6, [e_1,e_5]=e_7$\\	
	\hline	
	$257C$&$[e_1,e_2]=e_3,[e_1,e_3]=[e_2,e_4]=e_6, [e_2,e_5]=e_7$\\	
	\hline	
	$257F$&$[e_1,e_2]=e_3,[e_2,e_3]=[e_4,e_5]=e_6, [e_2,e_4]=e_7$\\	
	\hline	
	$S_1$&$[e_1,e_2]=e_6,[e_1,e_4]=[e_3,e_5]=[e_2,e_7]=e_8$\\	
	\hline	
	$S_2$&$[e_1,e_2]=e_4,[e_1,e_3]=[e_6,e_7]=[e_7,e_8]=e_5$\\	
	\hline	
	$S_3$&$[e_1,e_3]=e_6,[e_1,e_2]=[e_3,e_4]=[e_7,e_8]=e_5$\\	
	\hline
\end{longtable}
%==================================================

\end{document}